\newtheorem{Theorem}{Theorem}[section]
\newtheorem{Proposition}[Theorem]{Proposition}
\newtheorem{Definition}[Theorem]{Definition}
\newtheorem{Lemma}[Theorem]{Lemma}
\newtheorem{Corollary}[Theorem]{Corollary}
\newtheorem{Example}[Theorem]{Example}
\newtheorem{Remark}[Theorem]{Remark}
\newenvironment{proof}{\emph{Proof.}} {\quad \hfill $\blacksquare$ \newline}
\date{}
  \title{\bf On partial stochastic comparisons based on tail values at risk}
  \author{Alfonso J. Bello\\
   Dpto.\ Estad\'istica e Investigaci\'on Operativa, Universidad de C\'adiz \\
Facultad de Ciencias, Campus Universitario R\'io San Pedro s/n \\
11510 Puerto Real (C\'adiz), SPAIN \\ 
\texttt{alfonsojose.bello@uca.es}\\
    and\\
    Julio Mulero\\
   Dpto. Matem\'aticas, Universidad de Alicante \\
Facultad de Ciencias, Apartado de correos 99 \\
03080 Alicante, SPAIN \\
\texttt{julio.mulero@ua.es}\\
    and \\
    Miguel A. Sordo\\
    Dpto.\ Estad\'istica e Investigaci\'on Operativa, Universidad de C\'adiz \\
Facultad de Ciencias, Campus Universitario R\'io San Pedro s/n \\
11510 Puerto Real (C\'adiz), SPAIN \\ 
\texttt{mangel.sordo@uca.es}\\
and \\
    Alfonso Su\'arez-Llorens\\
    Dpto.\ Estad\'istica e Investigaci\'on Operativa, Universidad de C\'adiz \\
Facultad de Ciencias, Campus Universitario R\'io San Pedro s/n \\
11510 Puerto Real (C\'adiz), SPAIN \\ 
\texttt{alfonso.suarez@uca.es}}
\begin{document}

\def\spacingset#1{\renewcommand{\baselinestretch}%
{#1}\small\normalsize} \spacingset{1}


\maketitle

\begin{abstract}
In risk theory, financial asset returns often follow heavy-tailed distributions. Investors and risk managers used to compare risk measures as the value at risk or tail value at risk in order over the whole confidence levels to avoid the exposure to to large risks. In this paper we analyze the comparison between tail values at risk from a confidence level and beyond which is a reasonable criterion when we are focused on large losses or simply we cannot give a complete ordering over all the confidence levels. A family of stochastic orders indexed by $p_0\in(0,1)$ is proposed. We study
their properties and connections with other classical criteria as
the increasing convex and tail convex orders and we rank some parametrical families of distributions.  Finally, two applications with real datasets are given as well.
\end{abstract}

\noindent%
{\it Keywords:}  value at risk; tail value at risk; stochastic orders; financial risk.

\section{Motivation and Preliminaries}\label{sec:1}
In actuarial and financial sciences, risk managers and investors dealing with insurance losses and asset returns are often concerned with the right-tail risk of distributions, which is related to large deviations due to the right-tail losses or right-tail returns (see Wang [1]).  To compare the risk associated to different models, they use risk measures typically based on quantiles, such as the value at risk, the tail value at risk and some generalizations of these measures (see the book by Guégan and Hassani [2] for a recent review on the topic of risk measurement). Comparisons of two quantile-based risk measures made for a particular confidence level $p$ are not very informative (they are based on two single numbers) and a change in the confidence level may produce different conclusions. An~alternative method to compare two risks $X$ and $Y$ is to use stochastic orderings, which conclude that $X$ is smaller than $Y$ when a family of risk measures (rather than a single risk measure)  agrees in the conclusion that $X$ is less risky than $Y.$ 
For example, the increasing convex order (which is formally defined below) requires agreement of tail values at risk for any confidence level $p \in (0,1)$. An advantage of this method is, obviously, its robustness toward changes in the confidence level, which~can be interpreted in terms of a common agreement of different decision-maker's attitudes. 
By robustness, we understand how sensitive the comparison procedure is to different values of $p$. For other interpretations of robustness in risk measurement, see~Zhelonkin and Chavez-Demoulin [3].    However, this approach has the drawback that some distributions cannot be compared despite our intuition that one is less risky than the other. 

One way to increase the number of distributions that can be compared without losing robustness is to reduce the range of values of the confidence level $p$ required to reach an agreement. For example, an investor concerned with right-tail risks may think that $X$ is less risky than $Y$ if the respective tail values at risk agree in the conclusion that $X$ is less risky than $Y$ for any confidence level $p$ such that $p > p_0,$ where $p_0 \in (0,1)$ is chosen to suit his/her specific preferences. The aim of this paper is precisely to study a family of stochastic orders based on such agreement.

Let $X$ be a random variable describing losses of a financial asset and let $F$ be its distribution function (as usual, negative losses are gains). The value at risk of $X$ at level $p\in(0, 1),$ or $p$-quantile, is defined by 
\begin{equation}\label{quantile}
\textup{VaR}_p[X]= F^{-1}(p)=\inf\{x: F(x)\geq p\}, \text{ for all } p\in(0,1).
\end{equation}

For a given $p\in(0,1)$, $\textup{VaR}_p[X]$ represents the maximum loss  the investor can 
suffer with $100p\%$ confidence over a certain period of time.  Despite being widely used in practice, VaR has two major drawbacks. First, it does not describe the tail behavior beyond the confidence level. Second, VaR is not, in general, subadditive (except in some special cases; for example, when the underlying distribution belongs to the elliptical family of distributions, the estimator of  VaR is subadditive). The~literature offers different possible alternatives to overcome the limitations of VaR (see, for example, Ahmadi-Javid [4]). One of the most important is the tail value at risk or $\textup{TVaR}$, defined by 

\begin{equation*}
\textup{TVaR}_p[X] =\dfrac{1}{1-p}\int_p^1 F^{-1}(u)du, \text{ for all } p\in[0,1).
\end{equation*}

TVaR is subadditive (that is, $\textup{TVaR}_p[X+Y]\le \textup{TVaR}_p[X]+\textup{TVaR}_p[Y]$ for all losses $X$ and $Y$). Given~$p\in(0,1)$ and $X$ continuous, $\textup{TVaR}_p[X]$ represents the average  loss when losses exceed~$\textup{TVaR}_p[X]$. 

 In this paper, we study the following family of stochastic orders indexed by confidence levels $p_0 \in [0,1).$
\begin{Definition} \label{p0tvar} 
Let $X$ and $Y$ be two random variables and $p_0\in[0,1)$. Then, $X$ is said to be smaller than $Y$ in the $p_0$-tail value at risk order, denoted by $X\leq_{p_0\textup{-tvar}} Y$, if 
\begin{equation}\label{tvp}
\textup{TVaR}_{p}(X)\leq\textup{TVaR}_{p}(Y),\text{ for all }p> p_{0}.
\end{equation}
\end{Definition}
When (\ref{tvp}) holds for $p_0=0$, then we have the usual increasing convex order (see Lemma 2.1 in Sordo and Ramos [5]) and we denote $X\le_{icx}Y$. The books by Shaked and  Shanthikumar [6] and Belzunce et al. [7] collect many properties and applications of the increasing convex order. If $X\leq_{\textup{icx}}Y$, then $X$ is both smaller (in fact, $X\le_{icx}Y$ implies $\textup{E}[X] \leq \textup{E}[Y]$) and less variable than $Y.$ In finance and insurance, the increasing convex order is often interpreted in terms of stop-loss contracts and it is also called stop-loss order. Specifically, it holds that $X\leq_{\textup{icx}} Y$ if, and only if,
 \begin{equation}\label{charac_icx1}
\textup{E}[(X-x)_+]\leq \textup{E}[(Y-x)_+], \text{ for all }x\in\mathbb R,
\end{equation}
where $(x)_+=x$, if $x\ge 0$ and $(x)_+=0$, if $x<0$, or, equivalently, if 
\begin{equation}\label{charac_icx2}
\int_{x}^{+\infty}\overline F(t)dt\leq \int_{x}^{+\infty}\overline G(t)dt, 
\text{ for all }x\in\mathbb R,
\end{equation}
provided the integrals exist, where $\overline F=1-F$ and $\overline{G}=1-G$ are  the tail functions of $X$ and $Y$, respectively.  One purpose of this paper is to study how these characterizations change when the order $X\leq_{\textup{icx}} Y$ is replaced by the weaker    $X\leq_{p_0\textup{-tvar}} Y,$ for some $p_0>0$.

We provide an analytical example to motivate the study of the new family of stochastic orders. Let us consider two Pareto loss distributions $X \sim P(7,3)$ and $Y \sim P(3,2)$. The value at risk of a Pareto distribution $Z\sim P(a,k)$, with 
shape parameter $a>0$ and scale parameter $k \in\mathbb{R}$, is given by
\begin{equation*}
	F_Z^{-1}(p)=\frac{k}{(1-p)^{\frac{1}{a}}}, \text{ for all } p \in (0,1),
\end{equation*}
and, if $a>1,$ the expectation is $\textup{E}[Z]=a k/(a-1)$.  Figure \ref{motivation1} 
shows  the plot of the quantile functions for $X$ and $Y$.   Since $\textup{E}[X] = 
3.5>3=\textup{E}[Y]$, then  $X \nleq_{icx} Y$.  
\begin{figure}[H]
	\centering
	\includegraphics[width=0.5\textwidth]{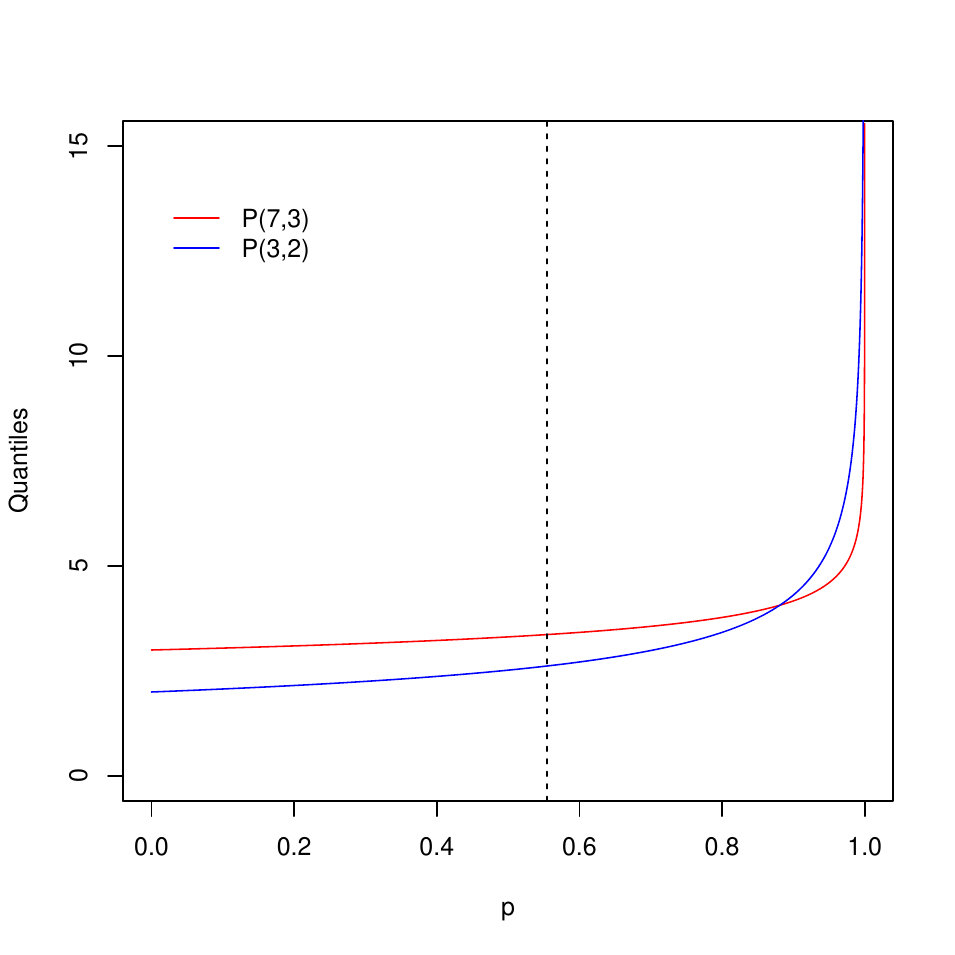}
	\caption{Quantile functions for  $X \sim P(7,3)$ and $Y \sim P(3,2)$.}\label{motivation1}
\end{figure}
However, it can be checked that  $\textup{TVaR}_{p}(X)\leq\textup{TVaR}_{p}(Y)$, for all $p\geq 0.55482$ where here \mbox{$p_0=0.55482$} is the minimum value such that $X\leq_{p_0\textup{-tvar}} Y$ ($p_0$ is represented with a dashed line in Figure \ref{motivation1}). An investor concerned with large deviations due to the right-tail losses may make decisions based on the tail value at risk for large values of $p$. If this is the case, he/she will evaluate $X$ as less dangerous than $Y,$ despite $\textup{E}[X]>\textup{E}[Y]$ and $X \nleq_{\textup{icx}} Y$.

The idea of limiting the number of comparisons to weaken the increasing convex order is not new. Given two random variables $X$ and $Y$ with the same mean,  Cheung and Vanduffel [8] say that $X$ is smaller than $Y$ in the tail convex order with index $x_0 \in \mathbb{R}$ (denoted by $X \leq_{\textup{tcx}(x_0)} Y$) if 
\begin{equation}\label{tcx_def}
\textup{E}[(X-x)_+]\leq \textup{E}[(Y-x)_+], \text{ for all }x\geq x_0.
\end{equation}

According to (\ref{charac_icx1}), if $X$ and $Y$ have the same mean, then $X \leq_{\textup{tcx}(-\infty)} Y$ is the same as $X\le_{icx}Y.$ It is natural to wonder, when $E[X]=E[Y]$, about the relationship between $X\le_{p_0\textup{-tvar}}Y$ and  $X \leq_{\textup{tcx}(x_0)} Y.$ We address this issue below.

For other recent studies of orders based on tail comparisons see
Sordo et al. [9], Mulero et al. [10] and Belzunce et al. [11].

The rest of the work is organized as follows. In Section~\ref{sec:2}, we study the main properties of the order $\le_{p_0\text{-tvar}}$ and find sufficient conditions under which it holds. We also investigate its relationships with other well-known stochastic orders and compare parametric families of distributions. In particular, we relate the order  $\leq_{p_0-tvar}$  to the notion of pure tail order as considered by Rojo [12]. In~Section~\ref{sec:5} we illustrate the applicability of the new family of orders using a real dataset. Section \ref{sec:6} contains~conclusions. 

Throughout the paper, ``increasing'' means ``non-decreasing'' and ``decreasing'' means ``non-increasing''. Random variables are assumed to have finite means. Given a function $h$, $S^-(h)$ denotes the number of sign changes of $h$ on its support, where zero terms are discarded.

\section{Properties and Relationships  with Other Stochastic Orders}\label{sec:2}

It is trivially verified that $X\leq_{p_0\textup{-tvar}} X$ for all $p_0\in(0,1)$ and that $X \leq_{p_0\textup{-tvar}}Y$ for $p_0\in(0,1)$ implies $X \leq_{q_0\textup{-tvar}}Y$, for all $q_0\in[p_0,1)$. 
It is also obvious that if $X$, $Y$ and $Z$ are three random variables such that $X\leq_{p_0\textup{-tvar}} Y$ and $Y\leq_{p_1\textup{-tvar}} Z$ with $p_0, p_1\in(0,1)$, then $X\leq_{p_3\textup{-tvar}} Z$ with $p_3=\max\{p_0,p_1\}$. Finally, if $X\leq_{p_0\textup{-tvar}} Y$ and $Y\leq_{p_1\textup{-tvar}} X$,  with $p_0,p_1\in(0,1)$, then $\textup{TVaR}_{p}(X)=\textup{TVaR}_{p}(Y)$, for all $p\in[p_3,1]$, where $p_3$ is defined as before.

Next, we provide some closure properties. In particular, we point out the closure under convergence in distribution and under increasing convex transformations. First of all, recall that  given a sequence of random variables $\{X_n:n=1,2,\dots\}$ with distribution functions $F_n$ and $F$, respectively, then $X_n$ is said to converge in distribution to $X$, denoted by $X_n\overset{\textit{d}}{\longrightarrow} X$, if $\lim_{n\rightarrow +\infty}F_n(x)=F(x)$ for all $x$ at which $F$ is continuous.

\begin{Proposition}
Let $\{X_n:n=1,2,\dots\}$ and $\{Y_n:n=1,2,\dots\}$ be two sequences of positive continuous random variables such that $X_n\overset{\text{d}}{\longrightarrow} X$ and $Y_n\overset{\text{d}}{\longrightarrow} Y$. Assume that $X_n$ and $X$ have a common interval support for all $n\in\mathbb N$ and $\lim_{n \rightarrow \infty}\textup{E}[X_n]=\textup{E}[X]$ (and, analogously, for $Y_n$ and $Y$ and their expectations). If~$X_n\leq_{p_0\textup{-tvar}}Y_n$ for $p_0\in(0,1)$, then $X\leq_{p_0\textup{-tvar}}Y$.
\end{Proposition}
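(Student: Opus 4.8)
The key observation is that for continuous random variables, the tail value at risk can be rewritten in a form amenable to convergence arguments. Starting from
\[
\textup{TVaR}_p[X] = \frac{1}{1-p}\int_p^1 F^{-1}(u)\,du,
\]
the substitution $u = F(t)$ together with an integration by parts (valid for positive random variables with interval support) yields the alternative expression
\[
\textup{TVaR}_p[X] = \textup{VaR}_p[X] + \frac{1}{1-p}\int_{\textup{VaR}_p[X]}^{\infty}\overline F(t)\,dt
= F^{-1}(p) + \frac{1}{1-p}\,\textup{E}\!\left[(X - F^{-1}(p))_+\right].
\]
So it suffices to show that, under our hypotheses, both $\textup{VaR}_p[X_n]\to\textup{VaR}_p[X]$ and $\textup{E}[(X_n - a_n)_+]\to \textup{E}[(X-a)_+]$ where $a_n = \textup{VaR}_p[X_n]$, $a = \textup{VaR}_p[X]$, and then pass to the limit in the inequality $\textup{TVaR}_p[X_n]\le\textup{TVaR}_p[Y_n]$ for each fixed $p > p_0$.

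\textbf{The steps.} First I would record that convergence in distribution of positive continuous variables with common interval support forces pointwise convergence of the quantile functions at every continuity point of $F^{-1}$, hence (by continuity of $F$, so $F^{-1}$ has at most countably many jumps) for all but countably many $p$; since Definition~\ref{p0tvar} only requires the TVaR inequality for $p > p_0$, working on a dense set of such $p$ and then using the right-continuity / monotonicity of $p\mapsto\textup{TVaR}_p$ in $p$ recovers the full range. Second, I would handle the convergence of $\textup{E}[(X_n - a_n)_+]$. Write $\textup{E}[(X_n-a_n)_+] = \textup{E}[X_n] - \textup{E}[\min(X_n,a_n)]$. The term $\textup{E}[X_n]\to\textup{E}[X]$ is given by hypothesis. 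For the truncated means, $\min(X_n,a_n)$ is a bounded (by $a_n\to a$) sequence that converges in distribution to $\min(X,a)$, so $\textup{E}[\min(X_n,a_n)]\to\textup{E}[\min(X,a)]$ by bounded convergence (after a uniform bound $a_n \le a+1$ for large $n$). This gives $\textup{E}[(X_n-a_n)_+]\to\textup{E}[(X-a)_+]$, and combined with $a_n\to a$, $1/(1-p)$ constant, we get $\textup{TVaR}_p[X_n]\to\textup{TVaR}_p[X]$. The same argument applies to $Y_n$, $Y$.

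\textbf{Conclusion and main obstacle.} With $\textup{TVaR}_p[X_n]\to\textup{TVaR}_p[X]$ and $\textup{TVaR}_p[Y_n]\to\textup{TVaR}_p[Y]$ for each $p$ in a dense subset of $(p_0,1)$, the inequality $\textup{TVaR}_p[X]\le\textup{TVaR}_p[Y]$ follows by taking limits; extending to all $p > p_0$ uses monotone/continuity properties of $\textup{TVaR}_p$ in $p$, and this yields $X\le_{p_0\textup{-tvar}}Y$. The main obstacle is the interchange of limit and integral in the truncated mean: convergence in distribution alone does not give convergence of means without uniform integrability, which is precisely why the hypotheses $\lim_n\textup{E}[X_n]=\textup{E}[X]$ (and likewise for $Y_n$) and the common interval support are imposed — they force the truncations $\min(X_n,a_n)$ to be genuinely dominated and rule out escape of mass to infinity, so care is needed to invoke them at exactly the right point rather than appealing to a naive continuous-mapping argument.
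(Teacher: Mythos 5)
Your proposal is correct and follows essentially the same route as the paper: both decompose $\textup{TVaR}_p$ into a quantile term plus a stop-loss term $\int_{F^{-1}(p)}^{\infty}\overline F(t)\,dt$, establish convergence of each piece using quantile convergence at continuity points and the hypothesis $\textup{E}[X_n]\to\textup{E}[X]$, and pass to the limit in the inequality for each fixed $p>p_0$. The only substantive difference is that where the paper cites M\"uller's theorem on convergence of stop-loss transforms and then invokes dominated convergence to handle the moving integration limit $F_n^{-1}(p)$, you prove that convergence directly via the identity $\textup{E}[(X_n-a_n)_+]=\textup{E}[X_n]-\textup{E}[\min(X_n,a_n)]$ and bounded convergence --- a self-contained substitute for the same lemma; also note that under the stated hypotheses ($X$ continuous with interval support) $F^{-1}$ is continuous on all of $(0,1)$, so your dense-set workaround, while harmless, is not needed.
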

\begin{proof}From Definition \ref{p0tvar}, it is clear  that $X\leq_{p_0\textup{-tvar}}Y$ if, and only if,
		\begin{equation}\label{charac_p0tvar1}
	\int_{p}^{1}F^{-1}(u)du\leq \int_{p}^{1}G^{-1}(u)du, \text{ for all } p\geq p_0.
	\end{equation}
	
	Integration by parts in \eqref{charac_p0tvar1} shows that
		\begin{equation}\label{charac_p0tvar2}
	X\leq_{p_0\textup{-tvar}}Y \Longleftrightarrow \int_{F^{-1}(p)}^{+\infty}\overline F(t)dt + F^{-1}(p)(1-p)\leq \int_{G^{-1}(p)}^{+\infty}\overline G(t)dt 
	+ G^{-1}(p)(1-p), p\geq p_{0}.
	\end{equation}
	
Now, from $X_n\overset{\text{d}}{\longrightarrow} X$, we have that
	\begin{equation}
	\lim_{n\rightarrow+\infty}F^{-1}_n(p)=  F^{-1}(p),\label{convergence1}
	\end{equation}
for all point $p$ where $F^{-1}$ is continuous, where $F_n$ and $F$ are the distribution functions of $X_n$ and $X$, respectively (see, for example, Chapter 21 in Van der Vaart [13]).  Moreover, from Theorem 2.3 in M\"uller~[14], it holds that
	\begin{equation}\label{convergence2}
	\lim_{n\rightarrow+\infty}\int_{x}^{+\infty}\overline F_n(t)dt=\int_{x}^{+\infty}\overline F(t)dt,\text{ for all }x\in\mathbb R.
	\end{equation}
	
	Using the dominated convergence theorem, it follows from (\ref{convergence1}) and (\ref{convergence2}) that
	 \begin{equation} \label{af}
	 \lim_{n\rightarrow+\infty}\left[\int_{F^{-1}_n(p)}^{+\infty}\overline F_n(t)dt+ F^{-1}_n(p)(1-p)\right]= \int_{F^{-1}(p)}^{+\infty}\overline F(t)dt+F^{-1}(p)(1-p).
	 \end{equation}
	 
From $Y_n\overset{\text{d}}{\longrightarrow} Y$ we see that (\ref{af}) also holds when $G$ and $G_n$ replace, respectively, $F$ and $F_n$. Now~the result follows from (\ref{charac_p0tvar2}).
\end{proof}

The closure under strictly increasing convex transformations require the following lemma taken from page 120 in Barlow and Proschan [15].
\begin{Lemma}\label{lemmaBP}
Let $W$ be a measure on the interval $(a,b)$, not necessarily nonnegative. Let $h$ be a nonnegative function defined on $(a,b)$.  If $\int_t^b dW(x)\geq 0$ for all $t\in (a,b)$ and if $h$ is increasing, then $\int_a^b h(x)dW(x)\geq 0$.
\end{Lemma}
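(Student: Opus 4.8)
The plan is to reduce the claim to the hypothesis $\int_t^b dW\ge 0$ by decomposing $h$ horizontally (the layer–cake formula). Since $h\ge 0$, write $h(x)=\int_0^{\infty}\mathbf{1}\{h(x)>\lambda\}\,d\lambda$, so that
\[
\int_a^b h(x)\,dW(x)=\int_a^b\left(\int_0^{\infty}\mathbf{1}\{h(x)>\lambda\}\,d\lambda\right)dW(x)=\int_0^{\infty}\left(\int_a^b\mathbf{1}\{h(x)>\lambda\}\,dW(x)\right)d\lambda ,
\]
where the interchange of the two integrations is the only substantive step and is discussed below. For fixed $\lambda\ge 0$, the set $A_\lambda=\{x\in(a,b):h(x)>\lambda\}$ is, because $h$ is increasing, an upper subinterval of $(a,b)$, i.e. $A_\lambda=(t_\lambda,b)$ or $[t_\lambda,b)$ with $t_\lambda=\inf A_\lambda\in[a,b]$. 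Hence $\int_a^b\mathbf{1}\{h(x)>\lambda\}\,dW(x)=\int_{A_\lambda}dW(x)$, which coincides with $\int_{t_\lambda}^{b}dW(x)$ up to the value $W(\{t_\lambda\})$; in either reading it is $\ge 0$ by the hypothesis (directly if $t_\lambda\in(a,b)$, by a limit $t\downarrow a$ if $t_\lambda=a$, and trivially if $t_\lambda=b$). Integrating a nonnegative quantity in $\lambda$ over $(0,\infty)$ then gives $\int_a^b h(x)\,dW(x)\ge 0$.

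The main obstacle is justifying the Fubini--Tonelli interchange, since $W$ is merely a signed measure. The clean way is to pass to the Jordan decomposition $W=W^{+}-W^{-}$ into nonnegative measures; because the integrand $(\lambda,x)\mapsto\mathbf{1}\{h(x)>\lambda\}$ is nonnegative and jointly measurable, Tonelli's theorem applies verbatim to $\int\!\int\cdot\,d\lambda\,dW^{+}$ and to $\int\!\int\cdot\,d\lambda\,dW^{-}$, and subtracting the two identities (both sides finite) yields the displayed interchange. Finiteness of the pieces --- equivalently, $\int_a^b h(x)\,|dW|(x)<\infty$ with $|dW|=dW^{+}+dW^{-}$ --- is exactly the integrability implicitly assumed when one writes $\int_a^b h\,dW$; in the applications of the lemma in this paper $h$ is bounded on the relevant compact subintervals and $W$ has locally finite total variation, so this is automatic.

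If one prefers to avoid the layer--cake formula, an entirely parallel ``vertical'' argument works: write $h(x)=h(a^{+})+\int_{(a,x]}dh(s)$ with $h(a^{+})=\lim_{x\downarrow a}h(x)\ge 0$ and $dh$ the nonnegative Stieltjes measure of the increasing function $h$, split $\int_a^b h(x)\,dW(x)=h(a^{+})\int_a^b dW(x)+\int_a^b\big(\int_{(a,x]}dh(s)\big)dW(x)$, and apply Tonelli (again through $W^{\pm}$) to the second term to obtain $\int_{(a,b)}\big(\int_{s}^{b}dW(x)\big)dh(s)$, which is $\ge 0$ because the inner integral is $\ge 0$ by hypothesis and $dh\ge 0$; the first term is $\ge 0$ since $\int_a^b dW(x)=\lim_{t\downarrow a}\int_t^b dW(x)\ge 0$.
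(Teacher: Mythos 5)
Your proof is correct, but note that the paper itself does not prove this lemma at all: it is imported verbatim from page 120 of Barlow and Proschan [15], so there is no in-paper argument to compare against. Your primary, ``horizontal'' layer-cake argument is a clean and self-contained route: writing $h(x)=\int_0^{\infty}\mathbf{1}\{h(x)>\lambda\}\,d\lambda$ reduces everything to the observation that monotonicity of $h$ makes each superlevel set an upper interval, whose $W$-measure is nonnegative by hypothesis; your handling of the Fubini--Tonelli interchange through the Jordan decomposition $W=W^{+}-W^{-}$ is the right way to deal with a signed measure, and you correctly flag that the needed finiteness is exactly the integrability implicit in writing $\int_a^b h\,dW$. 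Two small points deserve one more sentence each if you write this up: (a) when $A_\lambda=[t_\lambda,b)$ with $t_\lambda$ interior, the inequality $W([t_\lambda,b))\ge 0$ follows by continuity from above of the finite signed measure along $t\uparrow t_\lambda$, not ``directly'' from the hypothesis, which only controls the open intervals $(t,b)$; and (b) in your alternative ``vertical'' argument the identity $h(x)=h(a^{+})+\int_{(a,x]}dh(s)$ recovers the right-continuous modification of $h$ rather than $h$ itself at discontinuity points, so if $W$ charges such a point the decomposition needs a word of justification --- the layer-cake version avoids this entirely, which is a reason to prefer it. That vertical variant is essentially the classical Barlow--Proschan argument, so in effect you have supplied both the textbook proof and a slightly more robust alternative.
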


\begin{Proposition}\label{closuretrans}
Let $X$ and $Y$ be two random variables and let $\phi$ be a strictly increasing and convex function. If~$X\leq_{p_0\textup{-tvar}}Y$ for $p_0\in (0,1)$, then $\phi(X)\leq_{p_0\textup{-tvar}}\phi(Y)$.
\end{Proposition}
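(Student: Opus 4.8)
The plan is to work with the characterization \eqref{charac_p0tvar1}, which says that $X\le_{p_0\textup{-tvar}}Y$ is equivalent to $\int_p^1 F^{-1}(u)\,du \le \int_p^1 G^{-1}(u)\,du$ for all $p\ge p_0$, and to exploit the fact that the quantile function of $\phi(X)$ is simply $\phi\circ F^{-1}$ (since $\phi$ is strictly increasing, hence a homeomorphism onto its image, $F_{\phi(X)}^{-1} = \phi\circ F^{-1}$, and likewise for $Y$). So the goal becomes: show that
\[
\int_p^1 F^{-1}(u)\,du \le \int_p^1 G^{-1}(u)\,du \ \text{ for all } p\ge p_0
\quad\Longrightarrow\quad
\int_p^1 \phi\!\left(F^{-1}(u)\right)du \le \int_p^1 \phi\!\left(G^{-1}(u)\right)du \ \text{ for all } p\ge p_0.
\]

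The key step is to fix $p\ge p_0$ and apply Lemma \ref{lemmaBP} on the interval $(p,1)$ with the signed measure $dW(u) = \bigl(G^{-1}(u)-F^{-1}(u)\bigr)\,du$. The hypothesis $X\le_{p_0\textup{-tvar}}Y$ gives exactly $\int_t^1 dW(u) = \int_t^1\bigl(G^{-1}(u)-F^{-1}(u)\bigr)du \ge 0$ for all $t\in(p,1)$ — this uses that $t\ge p\ge p_0$, so the tail-integral inequality is available at $t$. To invoke the lemma we need an increasing \emph{nonnegative} function $h$ on $(p,1)$; convexity of $\phi$ gives us a supporting line. Concretely, I would pick a subgradient $c$ of $\phi$ at a convenient point and set $h(u) = \frac{\phi(G^{-1}(u)) - \phi(F^{-1}(u))}{G^{-1}(u)-F^{-1}(u)}$ wherever $G^{-1}(u)\ne F^{-1}(u)$; by convexity this difference quotient is monotone in the sense needed, but it is cleaner to instead write $\phi(y)-\phi(x) = \int_x^y \phi'(s)\,ds$ (using that a convex function is absolutely continuous on the interior of its domain, with increasing derivative $\phi'$ defined a.e.) and reduce matters to a two-dimensional argument, or simply to note that Lemma \ref{lemmaBP} applied with the increasing nonnegative function $h(u)=\phi'\!\bigl(F^{-1}(u)\bigr)$ after an appropriate rearrangement handles the linear part while a second application handles the curvature. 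The slickest route: decompose $\phi(t) = \phi(a) + c\,(t-a) + \psi(t)$ where $a=F^{-1}(p)$, $c\in\partial\phi(a)$, and $\psi(t)=\phi(t)-\phi(a)-c(t-a)\ge 0$ is convex with $\psi(a)=0$; the affine part is controlled directly by the hypothesis at level $p$, and for the nonnegative convex remainder $\psi$ one has $\psi(G^{-1}(u))-\psi(F^{-1}(u)) = h(u)\bigl(G^{-1}(u)-F^{-1}(u)\bigr)$ with $h$ increasing and nonnegative on $(p,1)$, so Lemma \ref{lemmaBP} applies.

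The main obstacle is the bookkeeping around the potential non-differentiability of $\phi$ and, more importantly, verifying that the relevant $h$ is genuinely increasing and nonnegative on the whole interval $(p,1)$ despite the fact that $F^{-1}$ and $G^{-1}$ may cross and that $\psi$ may vanish on a subinterval (making $h$ a $0/0$ quotient there). One has to argue that $h$ can be chosen to be a monotone selection — e.g.\ $h(u) = \bigl(\psi(G^{-1}(u)) - \psi(F^{-1}(u))\bigr)/\bigl(G^{-1}(u)-F^{-1}(u)\bigr)$ extends to an increasing nonnegative function because it equals the average of $\psi'$ over the interval with endpoints $F^{-1}(u), G^{-1}(u)$, both endpoints being increasing in $u$; that the convexity of $\psi$ forces this average to be nondecreasing; and that on the set where the denominator vanishes the integrand contributes nothing, so the value of $h$ there is irrelevant and can be assigned by monotone interpolation. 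Once $h$ is in hand, integrating $\psi(G^{-1}(u))-\psi(F^{-1}(u)) = h(u)\,dW(u)/du$ over $(p,1)$ and applying Lemma \ref{lemmaBP} gives $\int_p^1 \bigl[\psi(G^{-1}(u))-\psi(F^{-1}(u))\bigr]du \ge 0$; adding the affine contribution $c\int_p^1\bigl(G^{-1}(u)-F^{-1}(u)\bigr)du \ge 0$ (immediate from the hypothesis at $p$) yields $\int_p^1\phi(G^{-1}(u))\,du \ge \int_p^1\phi(F^{-1}(u))\,du$. Since $p\ge p_0$ was arbitrary and $\phi\circ F^{-1}$, $\phi\circ G^{-1}$ are the quantile functions of $\phi(X)$, $\phi(Y)$, the characterization \eqref{charac_p0tvar1} gives $\phi(X)\le_{p_0\textup{-tvar}}\phi(Y)$, completing the proof.
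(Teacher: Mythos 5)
Your overall strategy---the characterization \eqref{charac_p0tvar1}, the identity $F^{-1}_{\phi(X)}=\phi\circ F^{-1}$, and an application of Lemma \ref{lemmaBP} to the signed measure $dW(u)=\bigl(G^{-1}(u)-F^{-1}(u)\bigr)\,du$ on $(p,1)$---is exactly the paper's. But the route you actually commit to (subtract a supporting line at $a=F^{-1}(p)$ and apply the lemma to the difference quotient $h(u)=\bigl[\psi(G^{-1}(u))-\psi(F^{-1}(u))\bigr]/\bigl[G^{-1}(u)-F^{-1}(u)\bigr]$ of the convex remainder $\psi$) has a genuine gap: this $h$ is indeed increasing, but it is \emph{not} nonnegative in general, and Lemma \ref{lemmaBP} requires both. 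The quotient is the average of $\psi'=\phi'-c$ over the interval with endpoints $F^{-1}(u)$ and $G^{-1}(u)$, and $\psi'<0$ to the left of $a$. Nothing in the hypothesis prevents $G^{-1}(u)<F^{-1}(p)\le F^{-1}(u)$ for $u$ slightly above $p$ (the paper's own Weibull--Pareto example has $G^{-1}(p_0)<F^{-1}(p_0)$); taking, say, $\phi=\exp$, $a=0$, $F^{-1}(u)=1$ and $G^{-1}(u)=-10$ makes the quotient negative. So the lemma does not apply as you invoke it. The gap is repairable---since $h$ is increasing, $\{h<0\}$ is an initial segment of $(p,1)$ on which necessarily $G^{-1}(u)<F^{-1}(u)$, so there $h\,dW\ge 0$ pointwise, and the lemma can be applied on the complementary terminal segment---but you would have to say this, and you instead assert the nonnegativity without proof after flagging it as the main obstacle.

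The paper avoids the issue via the alternative you mention only in passing and then abandon: writing $\phi(G^{-1}(u))-\phi(F^{-1}(u))=\int_{F^{-1}(u)}^{G^{-1}(u)}\varphi(v)\,dv$ with $\varphi$ increasing and positive (essentially $\phi'$), one gets the pointwise lower bound $\varphi(F^{-1}(u))\bigl[G^{-1}(u)-F^{-1}(u)\bigr]$ \emph{in both cases} $F^{-1}(u)\le G^{-1}(u)$ and $F^{-1}(u)>G^{-1}(u)$, and then Lemma \ref{lemmaBP} applies in one stroke with $h(u)=\varphi(F^{-1}(u))$, which is genuinely positive and increasing. No affine decomposition and no case analysis on the sign of $h$ are needed.
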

\begin{proof}
Let $F_\phi$ and $G_\phi$ denote the distribution functions of $\phi(X)$ and $\phi(Y)$, respectively. Since $\phi$ is strictly increasing we see that $F_{\phi}^{-1}(p)=\phi(F^{-1}(p))$ and $G_{\phi}^{-1}(p)=\phi(G^{-1}(p))$, for all $p\in(0,1)$. Since~$\phi$ is convex, we know (see, for example, Theorem 10.11 in Zygmund [16]) that there exists an increasing function $\varphi$ such that 
\begin{equation*}
\phi (G^{-1}(u))-\phi (F^{-1}(u))=\int_{F^{-1}(u)}^{G^{-1}(u)}\varphi (v)dv.  \label{barlowproschanclosure}
\end{equation*}

Moreover, since $\phi$ is strictly increasing, the function $\varphi $ is positive (in fact, it is the derivative of $\phi$ except, perhaps, at some singular points).  
Observe that if $F^{-1}(u)\leq G^{-1}(u)$, then 
		\begin{equation*}
		\int_{F^{-1}(u)}^{G^{-1}(u)}\varphi (v)dv\geq \varphi (F^{-1}(u))\left[
		G^{-1}(u)-F^{-1}(u)\right], 
		\end{equation*}%
		and, if $F^{-1}(u)>G^{-1}(u)$, then 
		\begin{equation*}
		\int_{F^{-1}(u)}^{G^{-1}(u)}\varphi
		(v)dv=-\int_{G^{-1}(u)}^{F^{-1}(u)}\varphi (v)dv\geq \varphi (F^{-1}(u))
		\left[ G^{-1}(u)-F^{-1}(u)\right] .
		\end{equation*}%
		
	Therefore, we have that 
	\begin{eqnarray*}
\int_{p}^{1}\left[ \phi (G^{-1}(u))-\phi (F^{-1}(u))\right] du 
& = &\int_{p}^{1}\left( \int_{F^{-1}(u)}^{G^{-1}(u)}\varphi (v)dv\right) du \\
& \geq & \int_{p}^{1}\varphi (F^{-1}(u))\left[ G^{-1}(u)-F^{-1}(u)\right] du 
\geq 0,
	\end{eqnarray*}
where the last inequality follows from (\ref{charac_p0tvar1})
		and Lemma \ref{lemmaBP} using the fact that the function $\varphi (F^{-1}(u))I(p<u)$ 
is positive and increasing. Consequently,
		\begin{equation*}
		\int_{p}^{1} \phi (F^{-1}(u)) \leq \int_{p}^{1} \phi (F^{-1}(u)) du ,
	\end{equation*}
for all $p\geq p_0$,	which means $\phi(X)\leq_{p_0\textup{-tvar}}\phi(Y)$.
	\end{proof}

Next, we give the following relationship among the new order and the icx order of certain random~variables. 

\begin{Proposition}\label{icxsufficienteconditions}
Let $X$ and $Y$ be two random variables. If 

\[
\max\left\{X,F^{-1}(p_0)\right\}\leq_{\textup{icx}} \max\left\{Y,G^{-1}(p_0)\right\},
\]
then $X\leq_{p_0\textup{-tvar}}Y$.
\end{Proposition}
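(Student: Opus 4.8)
The plan is to reduce the statement to the integrated-quantile characterization (\ref{charac_p0tvar1}) by computing explicitly the quantile functions of the two truncated variables. Assume $p_0\in(0,1)$, so that $a:=F^{-1}(p_0)$ and $b:=G^{-1}(p_0)$ are finite; the case $p_0=0$ is immediate, since then $\max\{X,F^{-1}(0)\}=X$ and $\max\{Y,G^{-1}(0)\}=Y$ almost surely and the hypothesis reduces to $X\leq_{\textup{icx}}Y$. Put $X^{*}=\max\{X,a\}$ and $Y^{*}=\max\{Y,b\}$, which have finite means because $X$ and $Y$ do and $a,b\in\mathbb{R}$.

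The first step is the quantile identity $F_{X^{*}}^{-1}(p)=\max\{F^{-1}(p),a\}$ for every $p\in(0,1)$, and likewise $G_{Y^{*}}^{-1}(p)=\max\{G^{-1}(p),b\}$. Indeed, the distribution function of $X^{*}$ equals $F(x)$ for $x\geq a$ and vanishes for $x<a$, so $F_{X^{*}}^{-1}(p)=\inf\{x\geq a:F(x)\geq p\}$; since $F$ is non-decreasing and $F(a)\geq p_0$, this infimum equals $F^{-1}(p)$ when $F^{-1}(p)\geq a$ and equals $a$ otherwise, that is, $\max\{F^{-1}(p),a\}$. Because $F^{-1}$ and $G^{-1}$ are non-decreasing, the consequence I shall use is that $F_{X^{*}}^{-1}(u)=F^{-1}(u)$ and $G_{Y^{*}}^{-1}(u)=G^{-1}(u)$ for every $u\geq p_0$.

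The second step invokes the hypothesis. By (\ref{charac_p0tvar1}) applied with index $0$ (equivalently, Lemma~2.1 in Sordo and Ramos [5]), the assumption $X^{*}\leq_{\textup{icx}}Y^{*}$ yields $\int_{p}^{1}F_{X^{*}}^{-1}(u)\,du\leq\int_{p}^{1}G_{Y^{*}}^{-1}(u)\,du$ for all $p\in[0,1)$, in particular for all $p\geq p_0$. For such $p$, every $u\in[p,1)$ satisfies $u\geq p_0$, so the identities of the first step turn this into $\int_{p}^{1}F^{-1}(u)\,du\leq\int_{p}^{1}G^{-1}(u)\,du$ for all $p\geq p_0$; by (\ref{charac_p0tvar1}), read from right to left, this is precisely $X\leq_{p_0\textup{-tvar}}Y$.

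The proof is short, and the only point that calls for care is the quantile identity for the maximum: one has to account for the atom that the map $\max\{\cdot,a\}$ places at $a$ (of mass $F(a)\geq p_0>0$, even when $F$ is continuous) and check that the truncation leaves the quantile function unchanged at every level above $p_0$. After that, the argument is a direct substitution into (\ref{charac_p0tvar1}), and I foresee no genuine obstacle.
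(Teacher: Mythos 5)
Your proof is correct and follows essentially the same route as the paper's: both rest on the observation that truncating $X$ at $F^{-1}(p_0)$ leaves the quantile function (hence $\textup{TVaR}_p$) unchanged for $p\geq p_0$, so the icx comparison of the truncated variables restricted to $p\geq p_0$ gives the conclusion. The only difference is cosmetic — you write the truncated quantile as $\max\{F^{-1}(p),F^{-1}(p_0)\}$ where the paper writes it piecewise — and your explicit verification of the quantile identity for $\max\{X,a\}$ is a welcome bit of extra care.
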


\begin{proof}
Given a random variable $X$ with distribution function $F$, let 
\begin{equation*}
X_{0}=\max\left\{X,F^{-1}(p_0)\right\}, \ p_0\in(0,1),
\end{equation*}
and let $F^{-1}_{0}$ be its corresponding quantile function, given by
\begin{equation}\label{quantileXp}
F^{-1}_{0}(p)=\left\{\begin{array}{ll}
F^{-1}(p_0), &\text{ if }0<p<p_0,\\
F^{-1}(p), &\text{ if }p_0\leq p<1.
\end{array}\right.
\end{equation}

A straightforward computation gives
\vspace{6pt}
\begin{equation} \label{mct}
\text{TVaR}_p(X_{0})=\left\{\begin{array}{ll}
\frac{1}{1-p}\left(\int_{p_0}^1F^{-1}(u)du+F^{-1}(p_0)(p_0-p)\right), &\text{ if }0<p<p_0,\\
\frac{1}{1-p}\int_p^1 F^{-1}(u)du=\text{TVaR}_p(X), &\text{ if }p_0\leq p<1.
\end{array}\right.
\end{equation}

We obtain similarly $\text{TVaR}_p(Y_{0}),$ where $Y_0$ is analogously defined  to $X_0$. It is clear than $\text{TVaR}_p(X_{0})\le \text{TVaR}_p(Y_{0}) $ for all $p\in (0,1)$ implies $\text{TVaR}_p(X)\le \text{TVaR}_p(Y) $ for
$p_0\leq p<1$, which~proves the result. 
\end{proof}

\begin{Remark} 
A stop-loss order is defined as the maximum loss that an investor assumes  on a particular investment. Given a loss random variable $X$, the random variable $\max\left\{X,F^{-1}(p_0)\right\}$ can be interpreted as a stop-loss order at $F^{-1}(p_0).$ 
\end{Remark} 

Next we discuss the connections between the  $p_0$-tail value at risk order and the tail convex order defined by \eqref{tcx_def}. Observe that $X\leq_{tcx(x_0)} Y$ if $E[X]=E[Y]$ and
\begin{equation} \label{casitcx}
\int_{x}^{+\infty}\overline F(t)dt\leq \int_{x}^{+\infty}\overline G(t)dt, \ \forall x\geq x_0.
\end{equation}

With our notation, given $X$ and $Y$ such that $E[X]=E[Y],$ Theorem $4$ of Cheung and Lo [17] shows  that $X\leq_{p_0\textup{-tvar}}Y$ for 
$p_0 \in (0,1)$ implies $X\leq _{\textup{tcx}(F^{-1}(p_{0}))}Y$ and that $X\leq _{\textup{tcx}(x_0)}Y$ for $x_0\in\mathbb R$ implies $X\leq _{G(x_{0})\textup{-tvar}}Y$ and $X\leq _{\textup{tcx}(F^{-1}(G(x_{0}))}Y$.

\begin{Remark}
Given $X$ and $Y$ with the same mean, a natural question is whether   $X\leq_{p_0\textup{-tvar}}Y$ for $p_0 \in (0,1)$ implies $X\leq _{\textup{tcx}(x_0)}Y$ for $x_0<F^{-1}(p_{0})$. In general, the answer is no. Let us assume that $X\leq _{p_0\textup{-tvar}}Y$ for $p_0\in(0,1)$ such that $G^{-1}(p_{0})<F^{-1}(p_{0})$ and 
\begin{equation}
\label{eqAreas} \int_{p_{0}}^{1} F^{-1}(u) du = \int_{p_{0}}^{1} G^{-1}(u) du.
\end{equation}

Under these assumptions, $p_0=\{p\in (0,1): X\leq_{p\textup{-tvar} }Y\}\neq \emptyset$. Let us consider $x_0\in(G^{-1}(p_{0}),F^{-1}(p_{0}))$ such that 
\begin{equation}
\label{condAreapeor} \int_{F(x_{0})}^{p_{0}} F^{-1}(u) du - x_{0} \left( p_{0} - F(x_{0}) \right) > 
x_{0} \left( G(x_{0})-p_{0} \right) - \int_{F(x_{0})}^{p_{0}} G^{-1}(u) du.
\end{equation}

Then we have 
\begin{eqnarray}
\int_{x}^{+\infty} \left( \overline{F}(t)-\overline{G}(t) \right) dt &=& 
\int_{F(x_{0})}^{1} F^{-1}(u) du - x_{0} \left( G(x_{0})-F(x_{0}) \right) - 
\int_{G(x_{0})}^{1} G^{-1}(u) du\nonumber\\
&=& \int_{p_{0}}^{1} F^{-1}(u) du - \int_{p_{0}}^{1} G^{-1}(u) du \nonumber\\
&&
 + 
\int_{F(x_{0})}^{p_{0}} F^{-1}(u) du - x_{0} \left( p_{0} - F(x_{0}) \right)\nonumber\\&&+ \int_{p_{0}}^{G(x_{0})} G^{-1}(u) du - x_{0} \left( G(x_{0})-p_{0} \right)\label{eqintfin}.
\end{eqnarray}

From (\ref{eqAreas}), (\ref{condAreapeor}) and (\ref{eqintfin}), it holds that 
\begin{equation*}
\int_{x_{0}}^{+\infty} \left( \overline{F}(t)-\overline{G}(t) \right) dt > 0,
\end{equation*}
and $X\not\leq _{\textup{tcx}(x_0)}Y$. A similar reasoning shows that, in general, $X\leq _{tcx(x_0)}Y$ 
does not imply $X\leq _{p_{0}\textup{-tvar}}Y$ for $p_0<F(x_{0})$.
Figure \ref{DEF_Weibull_Pareto_Quantile} illustrates this situation for $X\sim W(3,1)$ and $Y\sim P(3/2,1)$, where $X$ is Weibull and $Y$ Pareto  with distribution functions $F$ and $G$, respectively. Recall that if $X\sim W(\alpha,\beta)$ then 
\begin{equation*}
F^{-1}(p)=\alpha\left(-\log(1-p)\right)^{1/\beta},
\end{equation*}
for $p\in (0,1)$.  Note that $\textup{E}[X]=\textup{E}[Y]$. It can be checked that  $X\leq _{p_{0}\textup{-tvar}}Y$ with  $p_0=0.68147$ (vertical line in Figure \ref{DEF_Weibull_Pareto_Quantile}) and  $X\leq _{\textup{tcx}(F^{-1}(p_0))}Y$ with $F^{-1}(p_0)=3.43711$ (horizontal line). However, since $G^{-1}(p_{0})<F^{-1}(p_{0})$ and \eqref{eqAreas} holds, $X\not\leq _{\textup{tcx}(x_0)}Y$ for any $x_0<F^{-1}(p_0)$.
\begin{figure} [H]
\centering
\includegraphics[width=0.5\linewidth]{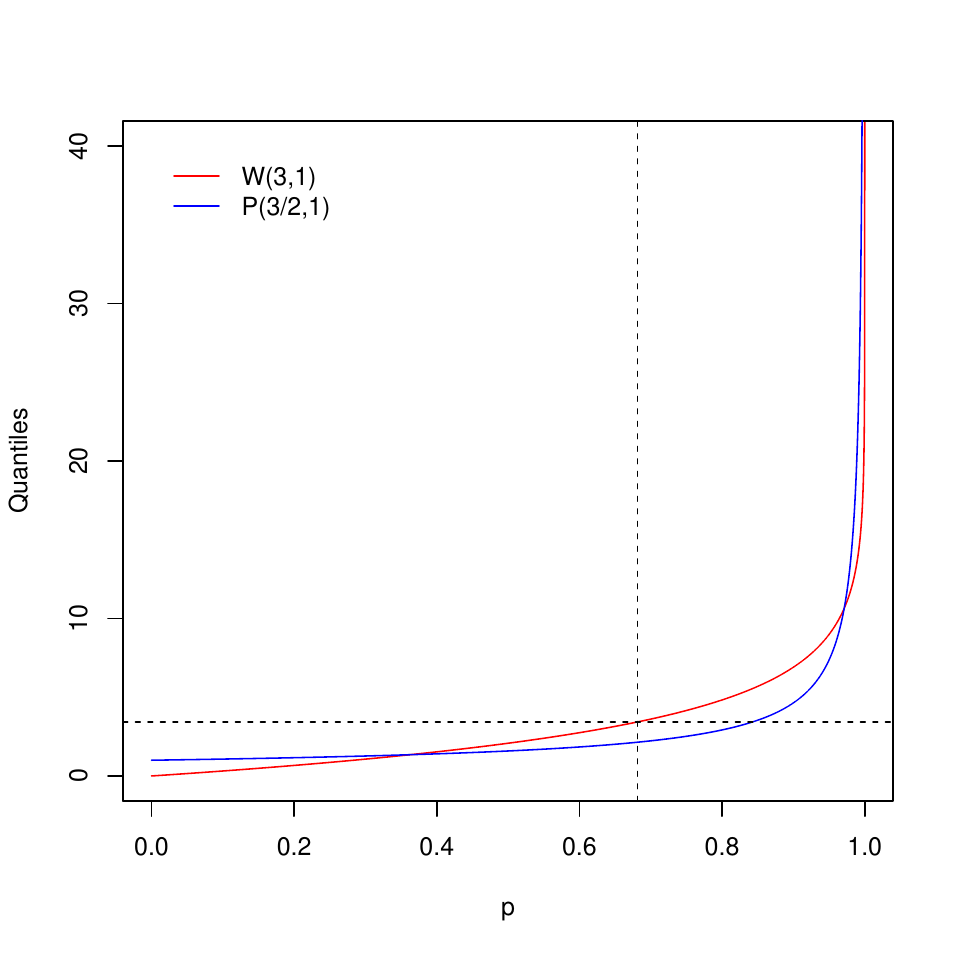}
\caption{Quantile functions for $X\sim W(3,1)$ and $Y\sim P(3/2,1)$.}
\label{DEF_Weibull_Pareto_Quantile} 
\end{figure} 
\end{Remark}

Now consider two general random variables (not necessarily with the same mean). The following result connects the  $p_0$-tail value at risk order and the stochastic order defined by \eqref{casitcx}. 

\begin{Proposition}\label{thm_tvar<-&->tcx}
Let $X$ and $Y$ be two random variables with distribution functions $F$ and $G$, respectively.
\begin{enumerate}
\item[(i)] If $X\leq_{p_0\textup{-tvar}}Y$, then $\int_{x}^{+\infty} \overline{F} (t)  dt \leq 
\int_{x}^{+\infty} \overline{G} (t)  dt$, for all $x \geq F^{-1}(p_{0})$.
\item[(ii)] If $\int_{x}^{+\infty} \overline{F} (t) \ dt \leq 
\int_{x}^{+\infty} \overline{G} (t) \ dt$, for all $x \geq x_{0}$, then 
$X\leq _{G(x_{0})-tvar}Y$.
\end{enumerate}
\end{Proposition}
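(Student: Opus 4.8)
The plan is to deduce both parts from the characterization \eqref{charac_p0tvar1} --- that is, $X\leq_{p_0\textup{-tvar}}Y$ iff $\int_{p}^{1}F^{-1}(u)\,du\leq\int_{p}^{1}G^{-1}(u)\,du$ for all $p\geq p_{0}$ --- together with the single auxiliary identity
\[
\int_{x}^{+\infty}\overline{H}(t)\,dt=\int_{0}^{1}\bigl(H^{-1}(u)-x\bigr)_{+}\,du ,\qquad x\in\mathbb{R},
\]
valid for any distribution function $H$ with $\overline{H}=1-H$ and quantile function $H^{-1}$ (it is Fubini's theorem applied to $\textup{E}[(X-x)_{+}]=\int_{x}^{+\infty}\overline{H}(t)\,dt$, rewritten through the quantile transform). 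Two elementary consequences of the monotonicity of $H^{-1}$ will be used: first, since $(H^{-1}(u)-x)_{+}$ vanishes on $(0,p)$ and equals $H^{-1}(u)-x$ on $(p,1)$ whenever $p=H(x)$, and likewise whenever $x=H^{-1}(p)$, in either of those cases
\[
\int_{x}^{+\infty}\overline{H}(t)\,dt=\int_{p}^{1}H^{-1}(u)\,du-x(1-p);
\]
second, for an \emph{arbitrary} level $q\in(0,1)$ one always has, because $(\,\cdot\,)_{+}\geq(\,\cdot\,)$,
\[
\int_{x}^{+\infty}\overline{H}(t)\,dt\geq\int_{q}^{1}\bigl(H^{-1}(u)-x\bigr)\,du=\int_{q}^{1}H^{-1}(u)\,du-x(1-q).
\]

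For part (i) I would fix $x\geq F^{-1}(p_{0})$ and put $p:=F(x)$. Because $\{t:F(t)\geq p_{0}\}$ is the closed half-line $[F^{-1}(p_{0}),\infty)$, the condition $x\geq F^{-1}(p_{0})$ forces $F(x)\geq p_{0}$, so $p\geq p_{0}$ and \eqref{charac_p0tvar1} gives $\int_{p}^{1}F^{-1}(u)\,du\leq\int_{p}^{1}G^{-1}(u)\,du$. Using the first consequence above for $F$ (with $p=F(x)$) and the second for $G$ (with $q=p$), one gets the chain
\[
\int_{x}^{+\infty}\overline{F}(t)\,dt=\int_{p}^{1}F^{-1}(u)\,du-x(1-p)\leq\int_{p}^{1}G^{-1}(u)\,du-x(1-p)\leq\int_{x}^{+\infty}\overline{G}(t)\,dt ,
\]
which is the assertion for every $x\geq F^{-1}(p_{0})$.

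For part (ii) I would fix $p>G(x_{0})$ and put $x:=G^{-1}(p)$. Since $G(x_{0})<p$, the point $x_{0}$ lies outside $\{t:G(t)\geq p\}=[G^{-1}(p),\infty)$, hence $x>x_{0}$ and the hypothesis applies at this $x$, yielding $\int_{x}^{+\infty}\overline{F}(t)\,dt\leq\int_{x}^{+\infty}\overline{G}(t)\,dt$. Now the first consequence for $G$ (with $x=G^{-1}(p)$) and the second for $F$ (with $q=p$) give
\[
\int_{p}^{1}F^{-1}(u)\,du-x(1-p)\leq\int_{x}^{+\infty}\overline{F}(t)\,dt\leq\int_{x}^{+\infty}\overline{G}(t)\,dt=\int_{p}^{1}G^{-1}(u)\,du-x(1-p),
\]
so $\int_{p}^{1}F^{-1}(u)\,du\leq\int_{p}^{1}G^{-1}(u)\,du$ for every $p>G(x_{0})$; the boundary value $p=G(x_{0})$ then follows by letting $p\downarrow G(x_{0})$, since $p\mapsto\int_{p}^{1}F^{-1}(u)\,du$ is continuous ($F^{-1},G^{-1}\in L^{1}(0,1)$ by the finite-mean assumption). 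By \eqref{charac_p0tvar1} this is exactly $X\leq_{G(x_{0})\textup{-tvar}}Y$.

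I do not expect a serious obstacle: once the displayed identity is in place, both parts are bookkeeping. The delicate points are purely about generalized inverses --- namely that $x\geq F^{-1}(p_{0})$ implies $F(x)\geq p_{0}$ in part (i) and that $p>G(x_{0})$ implies $G^{-1}(p)>x_{0}$ in part (ii), each of which I would justify by noting that $\{t:H(t)\geq p\}$ is a closed half-line with left endpoint $H^{-1}(p)$ --- and the mild discrepancy between the strict range $p>p_{0}$ in Definition \ref{p0tvar} and the non-strict range $p\geq p_{0}$ in \eqref{charac_p0tvar1}, which I would reconcile (as above) through continuity of the truncated integral.
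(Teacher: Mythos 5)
Your proof is correct and follows essentially the same route as the paper: the identity $\int_{x}^{+\infty}\overline H(t)\,dt=\int_{p}^{1}H^{-1}(u)\,du-x(1-p)$ at $x=H^{-1}(p)$ is the paper's integration-by-parts characterization \eqref{charac_p0tvar2}, and your ``second consequence'' is exactly the statement (cited by the paper from Dhaene et al.) that $x\mapsto(1-p)x+\int_{x}^{+\infty}\overline H(t)\,dt$ is minimized at $H^{-1}(p)$, which you simply derive from scratch via $(\cdot)_{+}\geq(\cdot)$. The only substantive differences are in your favour: you write out part (ii), which the paper dismisses as ``similar,'' and by parametrizing part (i) by $x$ with $p=F(x)$ rather than by $p$ with $x=F^{-1}(p)$ you avoid the small gap in the paper's last step, where the inequality is only established at points of the form $F^{-1}(p)$ and then asserted for all $x\geq F^{-1}(p_{0})$.
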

\begin{proof}
We prove (i) (the proof of (ii) is similar). Assume $X\leq_{p_0\textup{-tvar}}Y$. From (\ref{charac_p0tvar2}) we know that
\begin{equation}
\label{desintdescomp} (1-p)F^{-1}(p)+\int_{F^{-1}(p)}^{+\infty} \overline{F} (t) dt \leq 
(1-p)G^{-1}(p)+\int_{G^{-1}(p)}^{+\infty} \overline{G} (t) dt, \text{ for all } p \geq p_0.
\end{equation}

Fixed $p \geq p_0$, the function
\begin{equation*}
g_p(x)=(1-p)x+\int_{x}^{+\infty} \overline{G} (t) dt,\text{ for all }x\in\mathbb R,
\end{equation*}
reachs its minimum at  $G^{-1}(p)$ (see Theorem 3.2 in Dhaene et al. [18]). 
From this fact and (\ref{desintdescomp}) it follows~that 
\begin{equation*}
(1-p)F^{-1}(p)+\int_{F^{-1}(p)}^{+\infty} \overline{F} (t) dt \leq 
(1-p)F^{-1}(p)+\int_{F^{-1}(p)}^{+\infty} \overline{G} (t) dt, \text{ for all }  p \geq p_0,
\end{equation*} 
or, equivalently, 
\begin{equation}
\label{dessurvf} \int_{x}^{+\infty} \overline{F} (t)dt \leq \int_{x}^{+\infty} \overline{G} (t)dt, 
\text{ for all }  x \geq F^{-1}(x_{0}),
\end{equation}
which concludes the proof.
\end{proof}

\begin{Corollary}
\label{Cor_equiv_tvar_SurvFunct} Let $X$ and $Y$ be two random variables with distribution functions 
$F$ and $G$, respectively, and $p_{0}\in \left( 0,1\right)$ such that $F^{-1}(p_{0}) \leq 
G^{-1}(p_{0})$. Then, $X\leq _{p_{0}-tvar}Y$ if, and only if, one of the following equivalent 
conditions holds:
\begin{itemize}
\item[(i)] $E\left[ \left( X-x \right)_{+} \right] \leq 
E\left[ \left( Y-x \right)_{+} \right]$, for all $x \geq F^{-1}(p_{0})$.
\item[(ii)] $\int_{x}^{+\infty} \overline{F}(t) dt \leq 
\int_{x}^{+\infty} \overline{G}(t)dt$, for all $x \geq F^{-1}(p_{0})$.
\end{itemize}
\end{Corollary}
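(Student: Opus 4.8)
The statement packages three implications: the equivalence of (i) and (ii) is immediate, the direction ``$X\leq_{p_0\textup{-tvar}}Y\Rightarrow$ (ii)'' is already available, and only ``(ii) $\Rightarrow X\leq_{p_0\textup{-tvar}}Y$'' needs an argument; that argument essentially mirrors the proof of Proposition \ref{thm_tvar<-&->tcx}(i), and the hypothesis $F^{-1}(p_{0})\leq G^{-1}(p_{0})$ is exactly what makes the mirrored reasoning go through.

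First I would dispose of (i)$\Leftrightarrow$(ii): by the identity $\textup{E}[(X-x)_{+}]=\int_{x}^{+\infty}\overline F(t)\,dt$, $x\in\mathbb R$, and its analogue for $Y$, the two conditions are literally the same statement once one restricts to $x\geq F^{-1}(p_{0})$. Next, $X\leq_{p_0\textup{-tvar}}Y\Rightarrow$ (ii) is exactly Proposition \ref{thm_tvar<-&->tcx}(i); note that this half does not use the hypothesis $F^{-1}(p_{0})\leq G^{-1}(p_{0})$.

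For the remaining implication, fix $p\geq p_{0}$. By \eqref{charac_p0tvar2} it suffices to show $h_{F}(F^{-1}(p))\leq h_{G}(G^{-1}(p))$, where $h_{F}(x)=(1-p)\,x+\int_{x}^{+\infty}\overline F(t)\,dt$ and $h_{G}$ is defined analogously with $\overline G$. As in the proof of Proposition \ref{thm_tvar<-&->tcx} (Theorem 3.2 in Dhaene et al.\ [18], applied to $F$ instead of $G$), the function $h_{F}$ attains its minimum at $F^{-1}(p)$, hence $h_{F}(F^{-1}(p))\leq h_{F}(G^{-1}(p))$. Now the role of the hypothesis: since $G^{-1}$ is increasing, $G^{-1}(p)\geq G^{-1}(p_{0})\geq F^{-1}(p_{0})$, so condition (ii) may be evaluated at $x=G^{-1}(p)$, giving $\int_{G^{-1}(p)}^{+\infty}\overline F(t)\,dt\leq\int_{G^{-1}(p)}^{+\infty}\overline G(t)\,dt$, i.e.\ $h_{F}(G^{-1}(p))\leq h_{G}(G^{-1}(p))$. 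Chaining the two inequalities yields $\textup{TVaR}_{p}(X)\leq\textup{TVaR}_{p}(Y)$ for all $p\geq p_{0}$, hence in particular for all $p>p_{0}$, which is $X\leq_{p_0\textup{-tvar}}Y$.

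The one point requiring care is precisely this use of (ii) at the moving threshold $x=G^{-1}(p)$: without the assumption $F^{-1}(p_{0})\leq G^{-1}(p_{0})$ one could have $G^{-1}(p)<F^{-1}(p_{0})$ for some $p\geq p_{0}$, where (ii) is not known to hold, and the chain breaks; the hypothesis is exactly what confines $G^{-1}(p)$ to the range $[F^{-1}(p_{0}),+\infty)$. Everything else is routine. As an alternative route for this implication, one may instead apply Proposition \ref{thm_tvar<-&->tcx}(ii) with $x_{0}=F^{-1}(p_{0})$ to get $X\leq_{G(F^{-1}(p_{0}))\textup{-tvar}}Y$, and then use that $F^{-1}(p_{0})\leq G^{-1}(p_{0})$ forces $G(F^{-1}(p_{0}))\leq p_{0}$ together with the monotonicity of the family in its index noted at the beginning of Section \ref{sec:2}.
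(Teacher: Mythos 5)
Your proposal is correct and follows essentially the same route as the paper: both halves rest on Proposition \ref{thm_tvar<-&->tcx} (equivalently, on the Dhaene et al.\ minimization fact underlying it), with the hypothesis $F^{-1}(p_{0})\leq G^{-1}(p_{0})$ used precisely to guarantee that condition (ii) can be invoked at the relevant thresholds. The only cosmetic difference is that the paper simply restricts (ii) to $x\geq G^{-1}(p_{0})$ and cites Proposition \ref{thm_tvar<-&->tcx}(ii) with $x_{0}=G^{-1}(p_{0})$, whereas you unfold that same argument explicitly (and your closing ``alternative route'' via $x_{0}=F^{-1}(p_{0})$ and monotonicity in the index is the citation-style version of it).
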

\begin{proof}
Clearly, (i) and (ii) are equivalent. From $F^{-1}(p_{0}) \leq G^{-1}(p_{0})$ and (ii) we have
\begin{equation*}
\int_{x}^{+\infty} \overline{F}(t) dt \leq \int_{x}^{+\infty} \overline{G}(t)dt, \forall x \geq G^{-1}(p_{0}).
\end{equation*}

Using Theorem \ref{thm_tvar<-&->tcx}(ii) it follows that
$X\leq _{p_{0}\textup{-tvar}}Y$. Conversely, if $X\leq _{p_{0}\textup{-tvar}}Y$, (ii) follows from Theorem \ref{thm_tvar<-&->tcx}(i).
\end{proof}

The following result shows that two random variables, in which distribution functions cross a finite number of times, are ordered in the $p_0$-tail value at risk order for some $p_0 \in (0,1)$. The proof is straightforward and it is omitted.

\begin{Theorem}\label{condsuf}
Let $X$ and $Y$ be two random variables with distribution function $F$ and $G$, respectively. If~\mbox{$S^-(G^{-1}-F^{-1})$} is finite, nonzero and
the last sign change occurs from $-$ to $+$, then $X\leq_{p_n\textup{-tvar}}Y$, where $p_n$ denotes the last crossing point. 
\end{Theorem}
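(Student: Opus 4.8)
The plan is to reduce the statement to the elementary monotonicity characterisation (\ref{charac_p0tvar1}) of the order $\le_{p_0\textup{-tvar}}$ and then read off the sign of $G^{-1}-F^{-1}$ near the right endpoint $1$. Recall from (\ref{charac_p0tvar1}) that $X\le_{p_0\textup{-tvar}}Y$ is equivalent to $\int_p^1 F^{-1}(u)\,du\le\int_p^1 G^{-1}(u)\,du$ for all $p\ge p_0$ (equivalently, for all $p>p_0$, since both sides are continuous in $p$ and we only need the inequality on $(p_0,1)$). Hence it suffices to show that $\int_p^1\bigl(G^{-1}(u)-F^{-1}(u)\bigr)\,du\ge 0$ for every $p>p_n$.

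The key observation is that the hypotheses fix the sign of $h:=G^{-1}-F^{-1}$ on the whole interval $(p_n,1)$. Since $S^-(h)$ is finite and $p_n$ is, by definition, its last sign change, the function $h$ has no sign change on $(p_n,1)$; recalling that in the definition of $S^-$ the zero terms are discarded, this means that the non-zero part of $h$ keeps a constant sign on $(p_n,1)$. Because that last change occurs from $-$ to $+$, this constant sign is $+$, i.e.\ $h(u)\ge 0$ for all $u\in(p_n,1)$ (with possible vanishing on subintervals). Consequently, for any $p>p_n$ we have $(p,1)\subseteq(p_n,1)$, so $h\ge 0$ on $(p,1)$ and therefore $\int_p^1 h(u)\,du\ge 0$, which is exactly $\int_p^1 F^{-1}(u)\,du\le\int_p^1 G^{-1}(u)\,du$; by the characterisation above, $X\le_{p_n\textup{-tvar}}Y$.

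There is no real analytic obstacle here — this is why the authors call the proof ``straightforward'' — but the one point that deserves care is the bookkeeping of sign changes: one must invoke the convention that $S^-$ discards zeros in order to legitimately conclude that $h\ge 0$ throughout $(p_n,1)$ rather than merely ``$h$ does not change sign'' in some informal sense, and one should note that the conclusion is stated only for $p>p_n$, so the behaviour of $h$ exactly at the crossing point $p_n$ (where it may vanish) is irrelevant. A fully rigorous write-up would also want to make precise the phrase ``the last crossing point'', e.g.\ as $p_n=\sup\{u\in(0,1): h \text{ changes sign at } u\}$, but given the informal level at which sign changes are used elsewhere in the paper this can be left implicit.
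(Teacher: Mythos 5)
Your proof is correct and is precisely the ``straightforward'' argument the authors had in mind when they omitted the proof: after the last sign change, $G^{-1}-F^{-1}\ge 0$ pointwise, so the tail integrals compare directly via (\ref{charac_p0tvar1}). Your remarks on the $S^-$ convention and on continuity at $p_n$ are sensible but add nothing beyond what the paper implicitly assumes.
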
 

A random variable $X$ is said to be smaller than $Y$ in the univariate dispersive ordering (denoted~by $X\leq_{disp} Y$) if
$F^{-1}(q)-F^{-1}(p)\leq G^{-1}(q)-G^{-1}(p)$ for all $0< p\leq q< 1$. It is well-known (see~Theorem~2.6.7 in Belzunce et al. [7]) that if $X \leq_{disp} Y$ and $E[X]>E[Y]$ (which, in particular, implies that $X\nleq_{icx }Y$ ), then  $S^-(G^{-1}-F^{-1})=1$. This observation, together with Theorem \ref{condsuf}, allows to find many parametric distributions such that $X\leq_{p_0 \textup{-tvar}}Y$ for some $p_0>0$ and $X\nleq_{icx }Y$. Next, we provide some examples.

\begin{Example} \label{pfamily}
The following examples follow from Theorem \ref{condsuf} using results that are well-known for the dispersive order. Note that $X\nleq_{icx }Y$.
 \begin{enumerate}
\item[(i)] Let $X\sim N(\mu_{1}, \sigma_1)$ and $Y\sim N(\mu_{2},\sigma_2)$ be two normal random variables such that $\mu_{1} > \mu_{2}$ and $\sigma_1 < \sigma_2$. Then, $X \leq_{p_0 \textup{-tvar}} Y$, where $p_0= F_Z(\frac{\mu_1-\mu_2}{\sigma_2-\sigma_1})$ and $Z\sim N(0,1)$.
	\item[(ii)] Let $X\sim \text{Logistic}(\mu_1,\sigma_1)$ and $Y\sim \text{Logistic}(\mu_2,\sigma_2)$ be two logistic random variables such that $\mu_{1} > \mu_{2}$ and $\sigma_1 < \sigma_2$. Then, $X \leq_{p_0 \textup{-tvar}} Y$, where $p_0= F_Z (\frac{\mu_1-\mu_2}{\sigma_2-\sigma_1})$ and $Z\sim \text{Logistic}(0,1)$.
	\item[(iii)] Let $X\sim W(\lambda_1, k_{1})$ and $Y\sim W(\lambda_{2} , k_{2})$ be two Weibull random variables such that $\textup{E}[X] > \textup{E}[Y]$ and $k_{2} < k_{1}$. Then, $X \leq_{p_0 \textup{-tvar}} Y$, where $p_0= F_Z(a_0^{b_0})$, $a_0= \lambda_1 / \lambda_2 $,  $b_0=k_1k_2/ (k_1-k_2)$ and $Z\sim W(1,1)$.	
\item[(iv)] Let $X\sim P(a_1, k_{1})$ and $P\sim P(a_{2} , k_{2})$ be two Pareto random variables such that $\textup{E}[X] > \textup{E}[Y]$ and $k_{2} < k_{1}$. Then, $X \leq_{p_0 \textup{-tvar}} Y$, where $p_0= F_Z(a_0^{b_0})$, $a_0= a_1 / a_2 $,  $b_0=k_1k_2/ (k_1-k_2)$ and $Z\sim P(1,1)$.

\end{enumerate}
\end{Example}

\begin{Remark}
From Proposition \ref{closuretrans} it is apparent that (i) and (ii) in Example \ref{pfamily} are also valid for the log-normal and log-logistic distribution families, respectively.   
\end{Remark}
\begin{Remark}
The tail value at risk is a special case of distortion risk measure. Recall that a  distortion function is a  continuous, and nondecreasing function $h : [0, 1] \rightarrow [0, 1]$ such that $h(0) = 0$ and $h(1) = 1$. Given a random variable $X$ with finite mean, the distorted random variable $X_h$ induced by $h$ has a tail function given by
\begin{equation*}
\overline{F}_{h}(x)=h\left(\overline F(x)\right),
\end{equation*}
for all $x$ in the support of $X$. It is known that given two random variables $X$ and $Y$ such that $X\leq _{icx}Y$ and a concave distortion $h$, entonces $X_{h}\leq _{icx}Y_{h}$ (see Theorem 13 in Sordo et al. [19]). It is straightforward to show (although the proof is notationally cumbersome, so we omit it) that it  $X \leq _{p_{0}\textup{-tvar}} Y$ for $p_{0} \in (0,1)$ and $h$ is a concave distortion function, then $X_{h} \leq_{q_{0}\textup{-tvar}}Y_{h}$ where $q_{0}=G_{Y_h}\left(F^{-1} (p_{0})\right)$. 
\end{Remark}

Before ending this section, we emphasize that, given $p_0 \in (0,1)$,  the order $\leq_{p_0-tvar}$ is a pure tail order in the sense of Rojo [12]. This is shown in the following result, which shows that if  $X\leq_{p_0-tvar} Y$, then the density function $f(x)$ of $X$ decreases faster than the density function $g(x)$ of $Y$.  

\begin{Proposition}
\label{rojo} Let $X$ and $Y$ be two random variables with distribution functions $F$ and $G$ and density functions $f$ and $g$, respectively. Let $p_0 \in (0,1)$. Then
\begin{equation*}
X\leq_{p_0-tvar} Y \text{ implies } \lim_{x\rightarrow +\infty }\frac{f(x)}{g(x)}\leq 1.
\end{equation*}
\end{Proposition}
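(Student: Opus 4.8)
The plan is to derive the statement from the second-order tail comparison in Proposition~\ref{thm_tvar<-&->tcx}(i), together with a double integration and a proof by contradiction. Recall that, by that proposition, $X\leq_{p_0\textup{-tvar}}Y$ implies
\[
\int_{x}^{+\infty}\overline F(t)\,dt\leq\int_{x}^{+\infty}\overline G(t)\,dt,\qquad\text{for all }x\geq F^{-1}(p_{0}).
\]
This is the only consequence of the ordering that I would use.

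Next I would argue by contradiction. Suppose that $L:=\lim_{x\to+\infty}f(x)/g(x)>1$ (the case $L=+\infty$ is handled in the same way). Fix any constant $c$ with $1<c<L$. By the definition of the limit there exists $M\geq F^{-1}(p_{0})$ such that $f(t)\geq c\,g(t)$ for every $t\geq M$. Since $X$ and $Y$ are continuous we may write $\overline F(x)=\int_{x}^{+\infty}f(t)\,dt$ and $\overline G(x)=\int_{x}^{+\infty}g(t)\,dt$; integrating the pointwise inequality over $(x,+\infty)$ for $x\geq M$ gives $\overline F(x)\geq c\,\overline G(x)$, and integrating once more yields
\[
\int_{x}^{+\infty}\overline F(t)\,dt\;\geq\;c\int_{x}^{+\infty}\overline G(t)\,dt,\qquad\text{for all }x\geq M.
\]

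Combining the two displays, for every $x\geq M$ (hence $x\geq F^{-1}(p_{0})$) we obtain
\[
c\int_{x}^{+\infty}\overline G(t)\,dt\leq\int_{x}^{+\infty}\overline F(t)\,dt\leq\int_{x}^{+\infty}\overline G(t)\,dt,
\]
so $(c-1)\int_{x}^{+\infty}\overline G(t)\,dt\leq 0$. As $c>1$, this forces $\int_{x}^{+\infty}\overline G(t)\,dt=0$, i.e.\ $g\equiv 0$ a.e.\ on $(x,+\infty)$; but this is impossible, since the existence of $\lim_{x\to+\infty}f(x)/g(x)$ presupposes that $g(t)>0$ for all sufficiently large $t$, so that $\int_{x}^{+\infty}\overline G(t)\,dt>0$ for every $x$. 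This contradiction shows $L\leq 1$, which is the assertion.

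The argument is short, and the only delicate points are: first, the passage from the almost-everywhere pointwise bound $f\geq c\,g$ to the two tail inequalities, which relies on the representation of $\overline F$ and $\overline G$ as integrals of the densities and is where the continuity of $X$ and $Y$ enters; and second, making explicit the mild regularity assumption implicit in the statement — that the density ratio is eventually well defined — which guarantees $\int_{x}^{+\infty}\overline G(t)\,dt>0$ and makes the final contradiction genuine. In fact the same reasoning yields a little more, namely $\liminf_{x\to+\infty}f(x)/g(x)\leq 1$, so no real use is made of the existence of the full limit beyond ensuring that the ratio is defined near infinity.
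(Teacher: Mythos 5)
Your argument is correct, and it takes a genuinely different route from the paper. The paper's proof is a two-line application of L'H\^opital's rule: since both $\int_{x}^{+\infty}\overline F(t)\,dt$ and $\int_{x}^{+\infty}\overline G(t)\,dt$ tend to $0$, differentiating twice identifies $\lim_{x\to+\infty}f(x)/g(x)$ with $\lim_{x\to+\infty}\int_{x}^{+\infty}\overline F(t)\,dt\big/\int_{x}^{+\infty}\overline G(t)\,dt$, and the latter is $\leq 1$ because Proposition~\ref{thm_tvar<-&->tcx}(i) makes the ratio at most $1$ for every $x\geq F^{-1}(p_{0})$. You instead run the implication in the opposite direction: from a hypothetical limit $L>1$ you integrate the pointwise bound $f\geq c\,g$ twice to get $\int_{x}^{+\infty}\overline F(t)\,dt\geq c\int_{x}^{+\infty}\overline G(t)\,dt$ and contradict the same inequality from Proposition~\ref{thm_tvar<-&->tcx}(i). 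Both proofs rest on exactly the same consequence of the ordering, so the substance is shared; what your version buys is that it avoids invoking L'H\^opital (whose hypotheses -- the $0/0$ form, $g$ eventually nonvanishing, existence of the limit of the derivative ratio -- the paper leaves implicit), it makes explicit the regularity needed for the final step ($\int_{x}^{+\infty}\overline G(t)\,dt>0$), and it delivers the slightly stronger conclusion $\liminf_{x\to+\infty}f(x)/g(x)\leq 1$ without presupposing that the limit exists. The paper's version is shorter and gives the exact identification of the limit of the density ratio with the limit of the ratio of iterated tail integrals, which is closer in spirit to Rojo's notion of a pure tail order.
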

\begin{proof}
Just applying the L'Hôpital's rule and using Proposition 
\ref{thm_tvar<-&->tcx}(i), we see that
\begin{equation*}
\lim_{x \rightarrow +\infty } \frac{f(x)}{g(x)} = \lim_{x \rightarrow +\infty }\frac{\int_{x}^{+\infty} \overline{F}(t)} {\int_{x}^{+\infty} \overline{G}(t)} \leq 1.
\end{equation*}
\end{proof}

\section{A Real Data Example}\label{sec:5}

In this section, we provide a financial application with a real dataset, involving two random variables of $-$log returns (recall that if $p_{t}$ denotes the price of an asset at day $t$, the corresponding $-$log return is defined by $r_{t} = -log(p_{t} / p_{t-1})$). 
Data are of public access and can be obtained from the Yahoo! Finance site. In order to eliminate the time dependent effect, data are related to the weekly close of trading. 

We consider two national stock market indexes: the Mexican $S \& P / BMV$ IPC, denoted by MXX, that measures the performance of the largest and most liquid stocks listed on the Mexican Stock Exchange, and the Hang Seng Index, denoted by HSI, which is the 
main indicator of the performance of the $50$ largest companies of the Hong Kong Stock Exchange. For each index, we have obtained samples of size $n=104$ corresponding to the weekly closings from 1st February 2016 until 31st  January 2018. Let us denote by $R^{MXX}$ and $R^{HSI}$ the $-$log returns of MXX and HSI, respectively. We will obtain empirical evidence to conclude that $R^{MXX} \nleq_{icx}R^{HSI}$ but there exists $p_0\in(0,1)$ such that $R^{MXX} \leq_{p_0\textup{-tvar}}R^{HSI}$.

Before testing the orderings  we test the randomness by a classic runs test to $R^{MXX}$ and $R^{HSI}$ and we obtain the p-values $0.237$ and $0.1149$, respectively. At this point, we plot in Figure \ref{ecdf_MXX_HSI} the empirical probability distributions of $R^{MXX}$ and $R^{HSI}$.   

From Theorem \ref{condsuf} it is apparent that Figure \ref{ecdf_MXX_HSI} shows reasonable empirical evidence that $R^{MXX} \leq_{p_0\textup{-tvar}}R^{HSI}$ for a certain value $p_0\in(0,1)$. In order to see that $R^{MXX} \nleq_{icx}R^{HSI}$ we just need to check the expectations. For such a purpose, we first test the symmetry by performing the $M$, $CM$ and $MMG$ tests described in the lawstat R package. The minimum p-value of the three tests is greater than $0.05$, therefore, we cannot reject the hypothesis that both distributions, $R^{MXX}$ and $R^{HSI}$, are symmetric. Next we compare the medians by  a classical Wilcoxon signed-rank test for paired samples. We obtain that the median of $R^{MXX}$ is greater than the corresponding of $R^{HSI}$, p-value$=0.01$. Therefore, since data are assumed to be symmetric, we conclude that $E[R^{MXX}] > E[R^{HSI}]$, which~implies that $R^{MXX} \nleq_{icx}R^{HSI}$. 

\begin{figure}[H]
	\centering
	\includegraphics[width=0.5\linewidth]{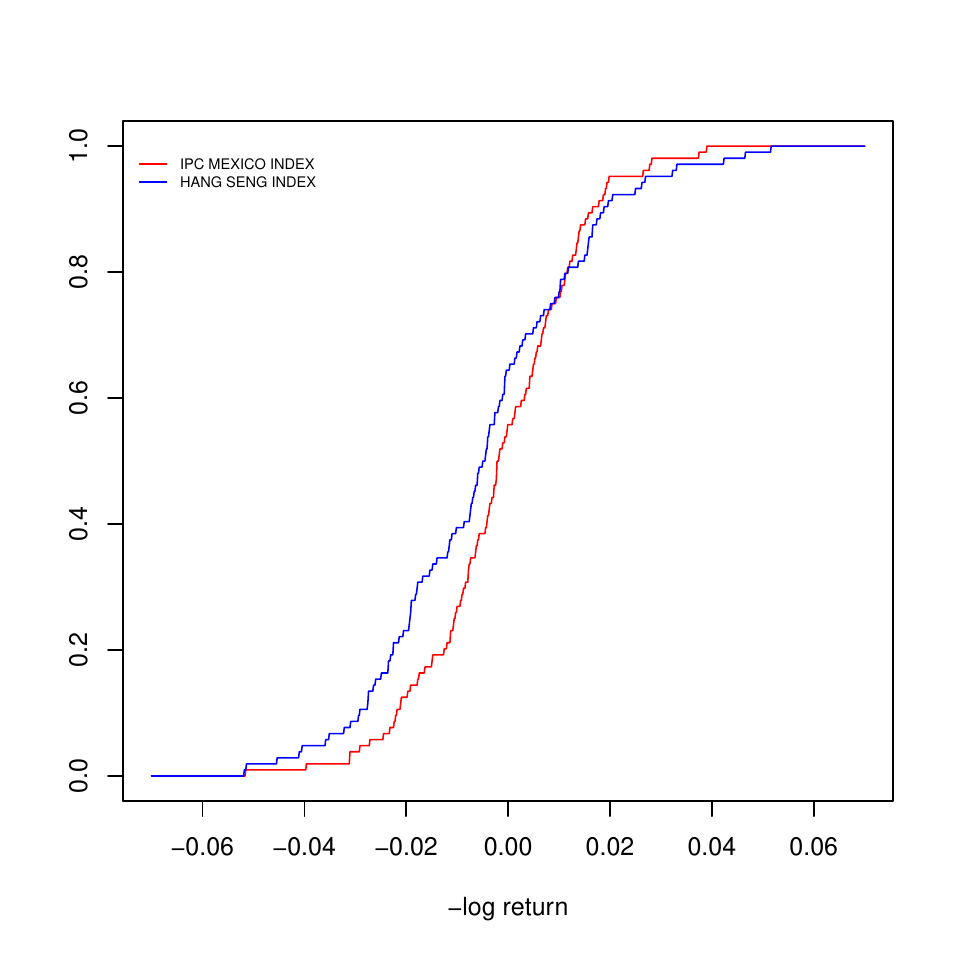}
	\caption{The empirical distribution functions of $R^{MXX}$ and $R^{HSI}$.}
	\label{ecdf_MXX_HSI} 
\end{figure}

The previous non-parametric study suggests to fit a distribution function to $R^{HSI}$ greater than $R^{MXX}$ in terms of $\leq_{p_0\textup{-tvar}}$ order (with bigger tail risk). Two classical distributions for log returns are the normal and logistic distributions, where the latter better reflects an excess of kurtosis. Table \ref{table1} summarizes the \emph{p}-values of the classical K-S goodness of fit test for both distribution families and Figure \ref{FigHist_MXX_HSI} provides the histograms of the log returns together with normal and logistic density estimates
using the maximum likelihood estimates (MLE) for the location and scale parameters given in Table \ref{table2}.

\begin{table}[H] 
	\centering
	\caption{The p-values for fitting normal and logistic distributions.} \label{table1} 
		\begin{tabular}{ccc} 
			\hline
			& \textbf{K-S Goodness of Fit Test (\emph{p}-Value)} &  \\ \hline
			\textbf{Index} &  \textbf{Normal} & \textbf{Logistic}   \\ \hline
			MXX &  $0.0512$ & $0.0347$ \\ \hline
			HSI &  $0.0525$ & $0.0557$ \\ \hline
		\end{tabular} \label{goftest}
	
\end{table}
\unskip
\begin{table}[H] 
	\centering
		\caption{The maximum likelihood estimates (MLE) for normal and logistic distributions.}\label{table2} 
		\begin{tabular}{ccccc} 
		\hline
			& \multicolumn{2}{c}{\textbf{Normal}} & \multicolumn{2}{c}{\textbf{Logistic}} \\ \hline
			\textbf{Index} & \boldmath{$\mu$} &\boldmath{$\sigma$} & \boldmath{$\mu$} & \boldmath{$\sigma$} \\ 
			\hline
			MXX & $-0.001486327$ & $0.01556418$ & $-0.001226663$ & $0.008706823$ \\ \hline
			HSI & $-0.005058756$ & $0.02017290$ & $-0.005402021$ & $0.011434130$ \\ 
			\hline
		\end{tabular} 

\end{table}

Although Table \ref{table1} suggests than normal distributions fit better than logistics, both can be appropriate. From Example \ref{pfamily} there is enough evidence to assume that $R^{MXX} \leq_{p_0\textup{-tvar}}R^{HSI}$ and the crossing point can be computed from a parametric point of view. In conclusion, a decision maker concerned by the tail value at risk for large values of $p$ will evaluate $R^{MXX}$ as less dangerous than $R^{HSI}$ despite $\textup{E}[R^{MXX}]>\textup{E}[R^{HSI}]$ and $R^{MXX} \nleq_{icx}R^{HSI}$.
\begin{figure}[H]
	\centering
	\begin{subfigure}[b]{0.49\linewidth} 
		\includegraphics[width=\linewidth]{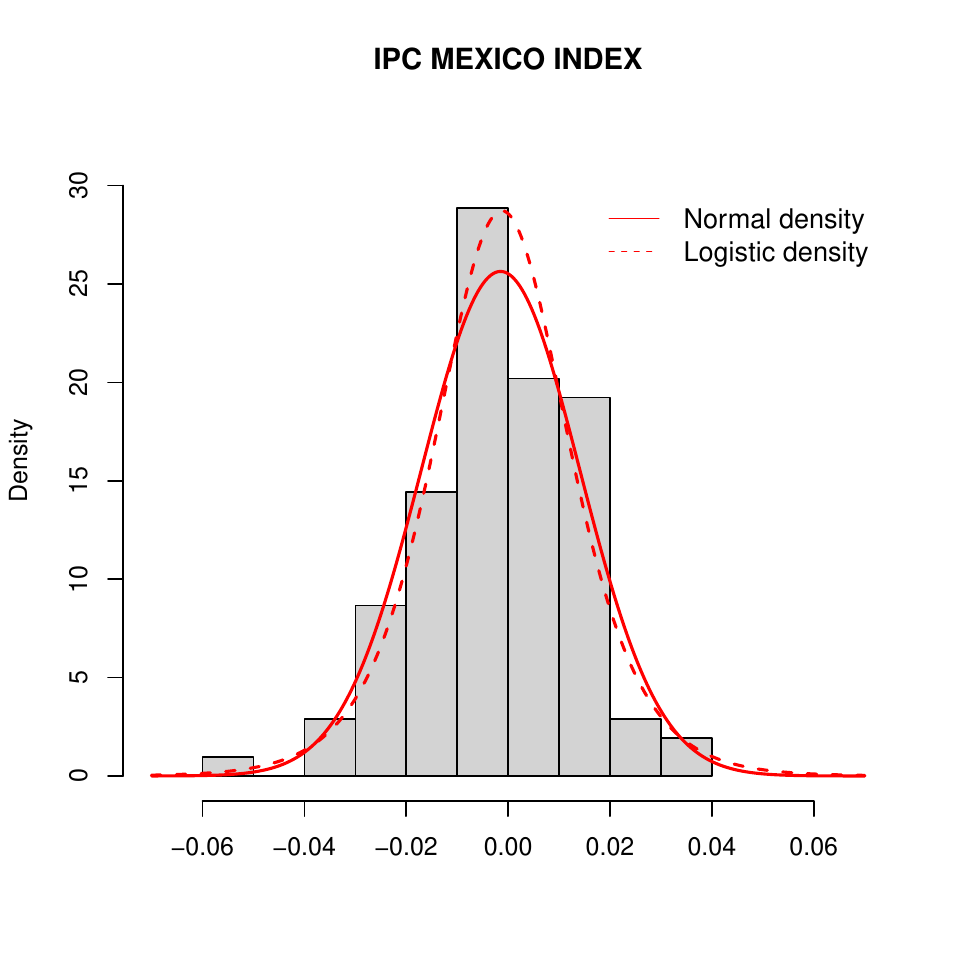}
	\end{subfigure}
	\begin{subfigure}[b]{0.49\linewidth} 
		\includegraphics[width=\linewidth]{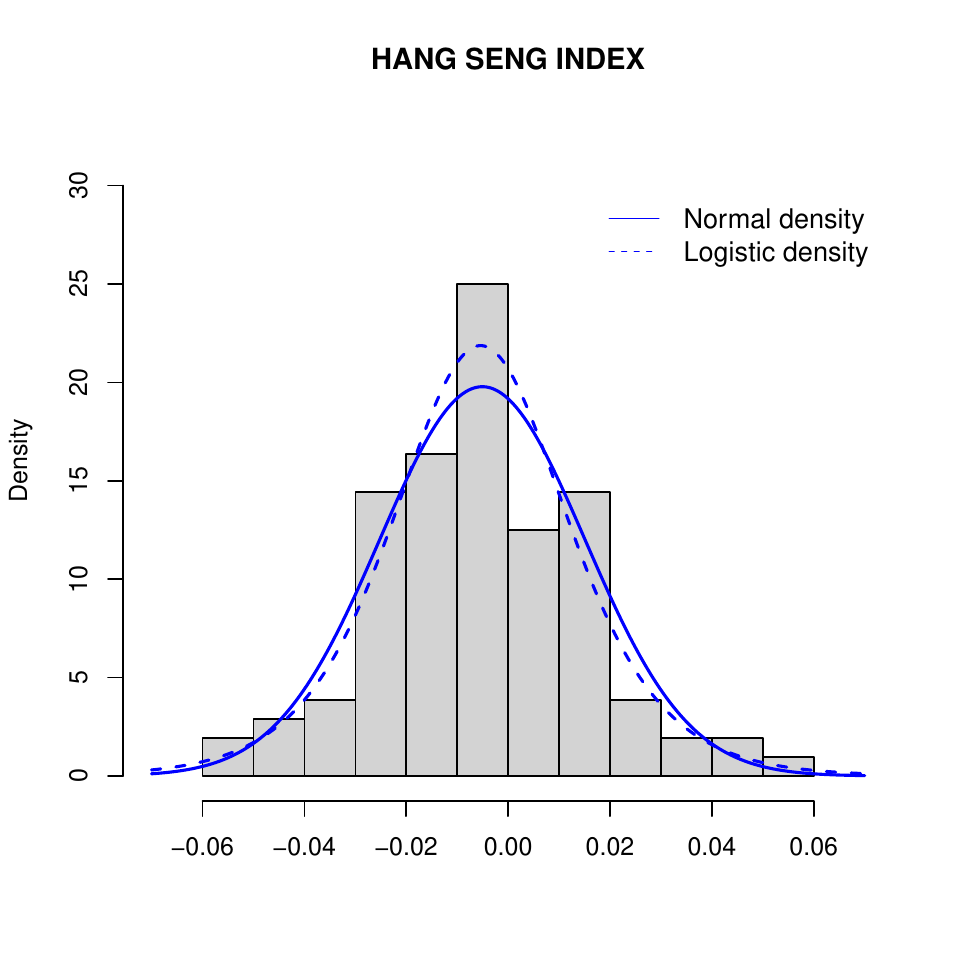}
	\end{subfigure}
	\caption{Histograms of the log
		returns with a normal density
		estimate (solid line) and a logistic density estimate
		(dashed line) superimposed.  The left-hand panel corresponds to $R^{MMX}$ and right-hand panel corresponds to $R^{HSI}$.}
	\label{FigHist_MXX_HSI} 
\end{figure}

\section{Conclusions}\label{sec:6}
 In this paper, we have introduced a family of stochastic orders indexed by confidence levels $p_0 \in (0,1)$, which are useful when we are concerned with right-tail risks. Once $p_0$ is fixed to suit our preferences, we say  that $X$ is less risky than $Y$ if the tail value at risk of $X$ is smaller than the tail value at risk of $Y$ for any confidence levels $p$ such that $p > p_0.$  We have studied the properties of this family of orders as well as its relationships with other stochastic orders, in particular with the tail convex order introduced by  Cheung and Vandulfel [8]. We have illustrated the results with a real financial dataset involving log returns.

\section{Acknowledgments}

The research of Julio Mulero was partially supported by the Conselleria d'Educaci\'o, Investigaci\'o, Cultura i Esport (Generalitat de la Comunitat Valenciana) under grant \textit{GV/2017/015} and the Ministry of Science and Innovation (Spain) under grant \textcolor{red}{\textit{PID2019-103971GB-I00}} (AEI/ FEDER, UE). The research of Bello, Sordo and Su\'arez-Llorens was partially  supported by Ministry of Economy and Competitiveness  (Spain) under grant MTM2017-89577-P, by the 2014-2020 ERDF Operational Programme and by the Department of Economy, Knowledge, Business and University of the Regional Government of Andalusia. Project reference: FEDER-UCA18-107519.

\end{document}